\newtheorem{theorem}{Theorem}
\newtheorem{corollary}{Corollary}
\newtheorem{definition}{Definition}
\newenvironment{proof}[1][Proof]{\noindent\textbf{#1.} }{\ \rule{0.0em}{0.0em}}
\begin{document}

\title{\textbf{Infinite Product of Bicomplex Numbers}}
\author{Chinmay Ghosh \\
Guru Nanak Institute of Technology\\
157/F, Nilgunj Road, Panihati, Sodepur\\
Kolkata-700114, West Bengal, India.\\
E-mail: chinmayarp@gmail.com }
\date{}
\maketitle

\begin{abstract}
{\footnotesize In this article the infinite product of bicomplex numbers is
defined and the convergence and divergence of this product are discussed.}

{\footnotesize \textbf{AMS Subject Classification} }$\left( 2010\right) $%
\textbf{\ }{\footnotesize : }$30G35.$\newline
\textbf{Keywords and Phrases}{\footnotesize \ : Bicomplex numbers, null
cone,bicomplex exponential, bicomplex logarithm, infinite product,
convergence, absolute covergence.}
\end{abstract}

\section{\textbf{Introduction}}

William Rowan Hamilton $\left( 1805-1865\right) $ \cite{2}\ discovered
quarternions in $1843.$ In $1853$ Hamilton introduced the idea of
biquarternions. H. Hankel developed a general system of \textquotedblleft
complex numbers with several real generators\textquotedblright\ in $1967.$
Also the mathematicians like G. Berloty $\left( 1886\right) ,$ E. Study $%
\left( 1889\right) ,\left( 1890\right) $, G. Scheffers $\left( 1891\right)
,\left( 1893\right) $ and C. Segre $\left( 1892\right) $ continued their
study in this area.

In $1882$ Corrado Segre $\left( 1860-1924\right) $ \cite{10} introduced a
new number system called bicomplex numbers. Futagawa $\left( 1928\right)
,\left( 1932\right) $ has published two articles on the theory of functions
of quadruples, which are equivalent to the couples of complex numbers.
Scheffers developed the notion of holomorphic bicomplex function. L.
Tchakalov $\left( 1924\right) $, F. Ringleb $\left( 1933\right) $, T. Takasu 
$\left( 1943\right) $, J. D. Riley $\left( 1953\right) $ and many other
mathematicians applied some ideas from classical complex analysis \cite{1a}
of one complex variables for the study of the holomorphy in bicomplex algebra

Though the two number systems are generalization of complex numbers by four
real numbers, they are essentially different. Quarternions form a
noncommutative division algebra but\ bicomplex numbers \cite{5} form a
commutative ring with divisors of zero. Recently many researchers (\cite{1}, 
\cite{3}, \cite{4}, \cite{6}, \cite{7}, \cite{8} and \cite{9}) found
different (algebraic, geometric, topological and dynamical) properties of
bicomplex numbers.

In this article the infinite product of bicomplex numbers is defined and the
convergence and divergence of this product are discussed.

\section{\textbf{Preliminaries}}

\subsection{Basic definitions}

The set of complex numbers $\mathbb{C}$ forms an algebra generated by two
real numbers and an imaginary element $\mathbf{i}$\ with the property $%
\mathbf{i}^{2}=-1.$\ Similarly duplicating this process by the complex
numbers and three imaginary elements $\mathbf{i}_{1},\mathbf{i}_{2}$ and $%
\mathbf{j}$ governed by the rules:%
\begin{equation*}
\mathbf{i}_{1}^{2}=\mathbf{i}_{2}^{2}=-1,\mathbf{j}^{2}=1
\end{equation*}%
and%
\begin{eqnarray*}
\ \mathbf{i}_{1}\mathbf{i}_{2} &=&\mathbf{i}_{2}\mathbf{i}_{1}=j \\
\mathbf{i}_{1}\mathbf{j} &=&\mathbf{ji}_{1}=-\mathbf{i}_{2} \\
\mathbf{i}_{2}\mathbf{j} &=&\mathbf{ji}_{2}=-\mathbf{i}_{1}.
\end{eqnarray*}%
one can define the bicomplex numbers.

Let us denote $\mathbb{C}(\mathbf{i}_{1})=\left\{ x+\mathbf{i}_{1}y:x,y\in 
\mathbb{R}\right\} ;$ and $\mathbb{C}(\mathbf{i}_{2})=\left\{ x+\mathbf{i}%
_{2}y:x,y\in \mathbb{R}\right\} .$ Each of $\mathbb{C}(\mathbf{i}_{1})$ and $%
\mathbb{C}(\mathbf{i}_{2})$\ is isomorphic to $\mathbb{C}.$

\begin{definition}[Bicomplex numbers]
\cite{6} Bicomplex numbers are defined as%
\begin{equation*}
\mathbb{T}=\{z_{1}+\mathbf{i}_{2}z_{2}:z_{1},z_{2}\in \mathbb{C}(\mathbf{i}%
_{1})\}.
\end{equation*}
\end{definition}

One can identify $\mathbb{C}$\ into $\mathbb{T}$\ in two different ways by
the sets%
\begin{equation*}
\mathbb{C}(\mathbf{i}_{1})=\left\{ z_{1}+\mathbf{i}_{2}0\right\} \subset 
\mathbb{T},
\end{equation*}%
\begin{equation*}
\text{and }\mathbb{C}(\mathbf{i}_{2})=\left\{ z_{1}+\mathbf{i}%
_{2}z_{2}:z_{1},z_{2}\in \mathbb{R}\right\} \subset \mathbb{T}.
\end{equation*}%
\ Although $\mathbb{C}(\mathbf{i}_{1})$ and $\mathbb{C}(\mathbf{i}_{2})$\
are isomorphic to $\mathbb{C}$ they are essentially different in $\mathbb{T}.
$\ 

\begin{definition}[Duplex numbers]
\cite{6} If we put $z_{1}=x$ and $z_{2}=\mathbf{i}_{1}y$ with $x;y\in 
\mathbb{R}$ in $z_{1}+\mathbf{i}_{2}z_{2}$, then we obtain the following
subalgebra of hyperbolic numbers, also called \textit{duplex numbers},%
\begin{equation*}
\mathbb{D}=\{x+\mathbf{j}y:\mathbf{j}^{2}=1,x;y\in \mathbb{R}\}.
\end{equation*}
\end{definition}

\begin{definition}[Bicomplex conjugates]
\cite{6} For bicomplex numbers, there are three possible conjugations. Let $%
w\in \mathbb{T}$ and $z_{1},z_{2}\in \mathbb{C}$ such that $w=z_{1}+\mathbf{i%
}_{2}z_{2}$. Then the three conjugations are defined as:%
\begin{eqnarray*}
w^{\dag _{1}} &=&(z_{1}+\mathbf{i}_{2}z_{2}\mathbf{)}^{\dag _{1}}\mathbf{=}%
\bar{z}_{1}+\mathbf{i}_{2}\bar{z}_{2}, \\
w^{\dag _{2}} &=&(z_{1}+\mathbf{i}_{2}z_{2}\mathbf{)}^{\dag _{2}}\mathbf{=}%
z_{1}-\mathbf{i}_{2}z_{2}, \\
w^{\dag _{3}} &=&(z_{1}+\mathbf{i}_{2}z_{2}\mathbf{)}^{\dag _{3}}\mathbf{=}%
\bar{z}_{1}-\mathbf{i}_{2}\bar{z}_{2}.
\end{eqnarray*}%
where $\overline{z}_{k}$ is the standard complex conjugate of complex
numbers $z_{k}\in \mathbb{C}$.
\end{definition}

\begin{definition}[Bicomplex modulus]
\cite{6} Let $z_{1},z_{2}\in \mathbb{C}$ and $w=z_{1}+\mathbf{i}_{2}z_{2}\in 
\mathbb{T}$, then we have that:%
\begin{eqnarray*}
|w|_{\mathbf{i}_{1}}^{2} &=&w.w^{\dag _{2}}\in \mathbb{C}(\mathbf{i}_{_{1}})
\\
|w|_{\mathbf{i}_{2}}^{2} &=&w.w^{\dag _{1}}\in \mathbb{C}(\mathbf{i}_{_{2}})
\\
|w|_{\mathbf{j}}^{2} &=&w.w^{\dag _{3}}\in \mathbb{D}.
\end{eqnarray*}%
Also the \textit{Euclidean }$\mathbb{R}^{4}$\textit{-norm} defined as%
\begin{equation*}
\left\Vert w\right\Vert =\sqrt{|z_{1}|^{2}+|z_{2}|^{2}}=\sqrt{\func{Re}(|w|_{%
\mathbf{j}}^{2})}.
\end{equation*}
\end{definition}

\begin{definition}[Complex square norm]
\cite{9} The complex (square) norm $CN(w)$ of the bicomplex number $w$ is
the complex number $z_{1}^{2}+z_{2}^{2};$ as $w^{\dag _{2}}\mathbf{=}z_{1}-%
\mathbf{i}_{2}z_{2},$ we can see that $CN(w)=ww^{\dag _{2}}.$ Then a
bicomplex number $w=z_{1}+\mathbf{i}_{2}z_{2}$ is invertible if and only if $%
CN(w)\neq 0.$ Precisely,%
\begin{equation*}
w^{-1}=\frac{w^{\dag _{2}}}{CN(w)}\Rightarrow (z_{1}+\mathbf{i}%
_{2}z_{2})^{-1}=\frac{z_{1}}{z_{1}^{2}+z_{2}^{2}}-\mathbf{i}_{2}\frac{z_{2}}{%
z_{1}^{2}+z_{2}^{2}}.
\end{equation*}
\end{definition}

One should note that $CN(w)$\ is not a real number. Also $CN(w)=0$ if and
only if $z_{2}=\mathbf{i}_{1}z_{1}$\ or $z_{2}=-\mathbf{i}_{1}z_{1}$\ i.e,
if we consider $z_{1}$\ and $z_{2}$\ to be vector then they are of equal
length and perpendicular to each other.

\begin{definition}[Null cone]
\cite{11} The set of all zero divisors is called the \textit{null cone}.
That terminology comes from the fact that when $w$ is written as $z_{1}+%
\mathbf{i}_{2}z_{2},$ zero divisors are such that $z_{1}^{2}+z_{2}^{2}=0.$%
These elements are also called singular (non-invertible) elements, otherwise
it is nonsingular (invertible). The set of all \textit{singular elements} of 
$\mathbb{T}$\ is denoted by $\mathcal{NC}$ or $\mathcal{O}_{2}.$
\end{definition}

\begin{definition}[Idempotent basis]
\cite{11} A bicomplex number $w=z_{1}+\mathbf{i}_{2}z_{2}$ has the following
unique idempotent representation:%
\begin{equation*}
w=z_{1}+\mathbf{i}_{2}z_{2}=(z_{1}-\mathbf{i}_{1}z_{2})\mathbf{e}_{1}+(z_{1}+%
\mathbf{i}_{1}z_{2})\mathbf{e}_{2}
\end{equation*}%
where $\mathbf{e}_{1}=\frac{1+\mathbf{j}}{2}$ and $\mathbf{e}_{2}=\frac{1-%
\mathbf{j}}{2}.$\newline
Also 
\begin{equation*}
x_{1}+x_{2}\mathbf{i}_{1}+x_{3}\mathbf{i}_{2}+x_{4}\mathbf{j}%
=((x_{1}+x_{4})+(x_{2}-x_{3}))\mathbf{e}_{1}+((x_{1}-x_{4})+(x_{2}+x_{3}))%
\mathbf{e}_{2}.
\end{equation*}
\end{definition}

\begin{definition}[Projection]
\cite{11} Two \textit{projection operators }$P_{1}$ and $P_{2}$ are defined
from $\mathbb{T}\longmapsto \mathbb{C}(\mathbf{i}_{_{1}})$ as%
\begin{equation*}
P_{1}(z_{1}+\mathbf{i}_{2}z_{2})=(z_{1}-\mathbf{i}_{1}z_{2})
\end{equation*}%
\begin{equation*}
P_{2}(z_{1}+\mathbf{i}_{2}z_{2})=(z_{1}+\mathbf{i}_{1}z_{2}{}_{1}).
\end{equation*}
\end{definition}

Thus a bicomplex number $w=z_{1}+\mathbf{i}_{2}z_{2}$ is written as 
\begin{equation*}
w=z_{1}+\mathbf{i}_{2}z_{2}=P_{1}(z_{1}+\mathbf{i}_{2}z_{2})\mathbf{e}%
_{1}+P_{2}(z_{1}+\mathbf{i}_{2}z_{2})\mathbf{e}_{2}=P_{1}(w)\mathbf{e}%
_{1}+P_{2}(w)\mathbf{e}_{2}.
\end{equation*}

This representation is very useful as addition, subtraction, multiplication
and division can be done term by term.

\subsection{Trigonometric representation of bicomplex numbers}

If $w=z_{1}+\mathbf{i}_{2}z_{2}\notin \mathcal{NC},$\ then 
\begin{eqnarray*}
w &=&z_{1}+\mathbf{i}_{2}z_{2} \\
&=&\sqrt{z_{1}^{2}+z_{2}^{2}}\left( \frac{z_{1}}{\sqrt{z_{1}^{2}+z_{2}^{2}}}+%
\mathbf{i}_{2}\frac{z_{2}}{\sqrt{z_{1}^{2}+z_{2}^{2}}}\right)  \\
&=&r_{c}\left( \cos \Theta _{c}+\mathbf{i}_{2}\sin \Theta _{c}\right)  \\
&=&r_{c}e^{\mathbf{i}_{2}\Theta _{c}}.
\end{eqnarray*}

where $r_{c}=\sqrt{z_{1}^{2}+z_{2}^{2}}=\sqrt{P_{1}(w)P_{2}(w)}=\left\vert
w\right\vert _{\mathbf{i}_{1}}$ is the complex modulus of $w$ and $\Theta
_{c}$ is the complex argument of $w,$ denoted by $\arg _{\mathbf{i}_{1}}w,$
which is obtained by solving the system 
\begin{eqnarray*}
\cos \Theta _{c} &=&\frac{z_{1}}{\sqrt{z_{1}^{2}+z_{2}^{2}}} \\
\sin \Theta _{c} &=&\frac{z_{2}}{\sqrt{z_{1}^{2}+z_{2}^{2}}}.
\end{eqnarray*}

If we denote by $\Theta _{c0}=\theta _{01}+\mathbf{i}_{1}\theta _{02}$ the
principal value of the complex argument, then 
\begin{equation*}
\arg _{\mathbf{i}_{1}}w=\left\{ \Theta _{c0}+2m\pi :m\in \mathbb{Z}\right\} .
\end{equation*}%
The principal value of the complex argument is denoted by $Arg_{\mathbf{i}%
_{1}}w$.

For a fixed $m\in \mathbb{Z},$ we denote by $\arg _{\mathbf{i}%
_{1},m}w=\Theta _{c0}+2m\pi ,$ the m-th branch of complex argument. Thus 
\begin{equation*}
\arg _{\mathbf{i}_{1}}w=\left\{ \arg _{\mathbf{i}_{1},m}w:m\in \mathbb{Z}%
\right\} .
\end{equation*}%
Also 
\begin{eqnarray*}
\frac{z_{1}}{z_{2}} &=&\frac{\cos \Theta _{c}}{\sin \Theta _{c}}\Rightarrow 
\frac{z_{1}-\mathbf{i}_{1}z_{2}}{z_{1}+\mathbf{i}_{1}z_{2}}=\frac{\cos
\Theta _{c}-\mathbf{i}_{1}\sin \Theta _{c}}{\cos \Theta _{c}+\mathbf{i}%
_{1}\sin \Theta _{c}} \\
&\Rightarrow &\frac{P_{1}(w)}{P_{2}(w)}=e^{-2\mathbf{i}_{1}\Theta _{c}} \\
&\Rightarrow &-2\mathbf{i}_{1}\Theta _{c}=\log \frac{P_{1}(w)}{P_{2}(w)} \\
&\Rightarrow &\Theta _{c}=\mathbf{i}_{1}\log \sqrt{\frac{P_{1}(w)}{P_{2}(w)}}%
.
\end{eqnarray*}

\subsection{Bicomplex exponential function}

\cite{1b} If $w$ is any bicomplex number then the sequence $\left( 1+\frac{w%
}{n}\right) ^{n}$ converges to a bicomplex number denoted by $\exp w$ or $%
e^{w},$ called the bicomplex exponential function, i.e.,%
\begin{equation*}
e^{w}=\lim\limits_{n\rightarrow \infty }\left( 1+\frac{w}{n}\right) ^{n}.
\end{equation*}

If $w=z_{1}+\mathbf{i}_{2}z_{2},$ then we get the bicomplex version of Euler
formula 
\begin{equation*}
e^{w}=e^{z_{1}}\left( \cos z_{2}+\mathbf{i}_{2}\sin z_{2}\right)
=e^{\left\vert w\right\vert _{\mathbf{i}_{1}}}\left( \cos \arg _{\mathbf{i}%
_{1}}w+\mathbf{i}_{2}\sin \arg _{\mathbf{i}_{1}}w\right) .
\end{equation*}

The bicomplex exponential function have the following properties:

\begin{itemize}
\item $e^{z_{1}}$ is the complex modulus and $z_{2}$ is the complex argument
of $e^{w}.$

\item $e^{0}=1.$

\item For any bicomplex number $w,e^{w}\notin \mathcal{NC}.$

\item For any two bicomplex numbers $w_{1},w_{2},e^{\left(
w_{1}+w_{2}\right) }=e^{w_{1}}.e^{w_{2}}.$

\item $e^{\left( \mathbf{i}_{2}\pi \right) }=-1.$

\item $e^{w}=e^{\left( w+2\pi \left( m\mathbf{i}_{1}+n\mathbf{i}_{2}\right)
\right) }$ where $m,n$ are two integers.
\end{itemize}

\subsection{Bicomplex logarithm}

\cite{1b} Let $u=v_{1}+\mathbf{i}_{2}v_{2}$ be a bicomplex number and $w$ be
another bicomplex number such that $e^{u}=w.$ Then $w\notin \mathcal{NC}.$
From the equation $e^{u}=w$ we get 
\begin{equation*}
e^{v_{1}}\left( \cos v_{2}+\mathbf{i}_{2}\sin v_{2}\right) =\left\vert
w\right\vert _{\mathbf{i}_{1}}\left( \cos \arg _{\mathbf{i}_{1}}w+\mathbf{i}%
_{2}\sin \arg _{\mathbf{i}_{1}}w\right) .
\end{equation*}%
It follows that 
\begin{equation*}
v_{1}\in \log \left\vert w\right\vert _{\mathbf{i}_{1}},
\end{equation*}%
the complex logarithm of the complex number $\left\vert w\right\vert _{%
\mathbf{i}_{1}},$ and that 
\begin{equation*}
v_{2}\in \arg _{\mathbf{i}_{1}}w=\left\{ \arg _{\mathbf{i}_{1},m}w:m\in 
\mathbb{Z}\right\} =\left\{ Arg_{\mathbf{i}_{1}}w+2m\pi :m\in \mathbb{Z}%
\right\} .
\end{equation*}%
The bicomplex logarithm of a non-singular bicomplex number $w$ is a set
defined by%
\begin{eqnarray*}
\log w &=&\log \left\vert w\right\vert _{\mathbf{i}_{1}}+\mathbf{i}_{2}\arg
_{\mathbf{i}_{1}}w \\
&=&\log \left\vert \left\vert w\right\vert _{\mathbf{i}_{1}}\right\vert +%
\mathbf{i}_{1}\arg \left\vert w\right\vert _{\mathbf{i}_{1}}+\mathbf{i}%
_{2}\arg _{\mathbf{i}_{1}}w \\
&=&\log \left\vert \left\vert w\right\vert _{\mathbf{i}_{1}}\right\vert +%
\mathbf{i}_{1}\left\{ Arg\left\vert w\right\vert _{\mathbf{i}_{1}}+2m\pi
\right\} +\mathbf{i}_{2}\left\{ Arg_{\mathbf{i}_{1}}w+2n\pi \right\} \\
&=&\log \left\vert \left\vert w\right\vert _{\mathbf{i}_{1}}\right\vert +%
\mathbf{i}_{1}Arg\left\vert w\right\vert _{\mathbf{i}_{1}}+\mathbf{i}%
_{2}Arg_{\mathbf{i}_{1}}w+\left\{ \mathbf{i}_{1}2m\pi +\mathbf{i}_{2}2n\pi
\right\} \\
&=&\left\{ \log _{m,n}w:m,n\in \mathbb{Z}\right\}
\end{eqnarray*}%
where $\log _{m,n}w=\log \left\vert \left\vert w\right\vert _{\mathbf{i}%
_{1}}\right\vert +\mathbf{i}_{1}Arg\left\vert w\right\vert _{\mathbf{i}_{1}}+%
\mathbf{i}_{2}Arg_{\mathbf{i}_{1}}w+\left\{ \mathbf{i}_{1}2m\pi +\mathbf{i}%
_{2}2n\pi \right\} $ is called the $\left( m,n\right) $-th branch of the
bicomplex logarithm of $w.$ The principal branch bicomplex logarithm is
defined by $Logw=\log \left\vert \left\vert w\right\vert _{\mathbf{i}%
_{1}}\right\vert +\mathbf{i}_{1}Arg\left\vert w\right\vert _{\mathbf{i}_{1}}+%
\mathbf{i}_{2}Arg_{\mathbf{i}_{1}}w.$ Although the bicomplex logarithm is
not a (univalued) function, every branch of bicomplex logarithm is a
function defined on $\mathbb{T}-\mathcal{NC}.$ If we write $\log w$ as a
function we always take it to be the principal branch of logarithm unless
otherwise stated.

If the idempotent representation of $w$ is $w=P_{1}(w)\mathbf{e}_{1}+P_{2}(w)%
\mathbf{e}_{2}=w^{\prime }\mathbf{e}_{1}+w^{\prime \prime }\mathbf{e}_{2}$
then from Ringleb Decomposition Theorem \cite{5a} we get 
\begin{equation*}
\log w=\left( \log w^{\prime }\right) \mathbf{e}_{1}+\left( \log w^{\prime
\prime }\right) \mathbf{e}_{2}.
\end{equation*}%
Also if the idempotent representation of $u$ is $u=P_{1}(u)\mathbf{e}%
_{1}+P_{2}(u)\mathbf{e}_{2}=u^{\prime }\mathbf{e}_{1}+u^{\prime \prime }%
\mathbf{e}_{2}$ then the equation $e^{u}=w$ is equivalent to the system of
two complex equations%
\begin{eqnarray*}
e^{u^{\prime }} &=&w^{\prime } \\
e^{u^{\prime \prime }} &=&w^{^{\prime \prime }}
\end{eqnarray*}%
which have the solutions in complex logarithm $\log w^{\prime }$ and $\log
w^{\prime \prime }$ respectively. Therefore%
\begin{eqnarray*}
\log w &=&\log \left\vert w\right\vert _{\mathbf{i}_{1}}+\mathbf{i}_{2}\arg
_{\mathbf{i}_{1}}w \\
&=&\log \sqrt{P_{1}(w)P_{2}(w)}+\mathbf{i}_{2}\left( \mathbf{i}_{1}\log 
\sqrt{\frac{P_{1}(w)}{P_{2}(w)}}\right) \\
&=&\log \sqrt{w^{\prime }w^{\prime \prime }}+\mathbf{i}_{2}\left( \mathbf{i}%
_{1}\log \sqrt{\frac{w^{\prime }}{w^{\prime \prime }}}\right) \\
&=&\frac{1}{2}\log w^{\prime }w^{\prime \prime }+\mathbf{i}_{2}\left( 
\mathbf{i}_{1}\frac{1}{2}\log \frac{w^{\prime }}{w^{\prime \prime }}\right) .
\end{eqnarray*}

\subsection{Power series of bicomplex numbers}

An infinite series of bicomplex numbers $\dsum\limits_{n=0}^{\infty }c_{n}$
is said to be convergent if the sequence of its partial sums $%
S_{n}=\dsum\limits_{k=0}^{n}c_{k}$ is convergent, i.e., for $\varepsilon >0$
there exists integer $N$ such that $\left\Vert S_{n}-S_{m}\right\Vert
<\varepsilon $ for all $m,n>N.$

If $c_{n}=c_{n}^{\prime }\mathbf{e}_{1}+c_{n}^{\prime \prime }\mathbf{e}_{2}$
then $\dsum\limits_{n=0}^{\infty }c_{n}$ is convergent if and only if $%
\dsum\limits_{n=0}^{\infty }c_{n}^{\prime }$ and $\dsum\limits_{n=0}^{\infty
}c_{n}^{\prime \prime }$ are convergent in the ordinary sense.

The series $\dsum\limits_{n=0}^{\infty }c_{n}$ converges to a bicomplex
number $S$ if for $\varepsilon >0$ there exists integer $N$ such that $%
\left\Vert S_{n}-S\right\Vert <\varepsilon $ for all $n>N.$

It can be easily checked that $\dsum\limits_{n=0}^{\infty }c_{n}$ converges
to $S=S_{n}^{\prime }\mathbf{e}_{1}+S_{n}^{\prime \prime }\mathbf{e}_{2}$ if
and only if $\dsum\limits_{n=0}^{\infty }c_{n}^{\prime }$ and $%
\dsum\limits_{n=0}^{\infty }c_{n}^{\prime \prime }$ converge to $%
S_{n}^{\prime }$ and $S_{n}^{\prime \prime }$ respectively.

A power series of bicomplex numbers $\dsum\limits_{n=0}^{\infty }c_{n}w^{n}$
is said to be convergent if and only if $\dsum\limits_{n=0}^{\infty
}c_{n}^{\prime }w^{\prime n}$ and $\dsum\limits_{n=0}^{\infty }c_{n}^{\prime
\prime }w^{\prime \prime n}$ converge as $c_{n}w^{n}=\left( c_{n}^{\prime }%
\mathbf{e}_{1}+c_{n}^{\prime \prime }\mathbf{e}_{2}\right) \left( w^{\prime }%
\mathbf{e}_{1}+w^{\prime \prime }\mathbf{e}_{2}\right) ^{n}=\left(
c_{n}^{\prime }\mathbf{e}_{1}+c_{n}^{\prime \prime }\mathbf{e}_{2}\right)
\left( w^{\prime n}\mathbf{e}_{1}+w^{\prime \prime n}\mathbf{e}_{2}\right)
=c_{n}^{\prime }w^{\prime n}\mathbf{e}_{1}+c_{n}^{\prime \prime }w^{\prime
\prime n}\mathbf{e}_{2}.$

The set of all interior points of the set of points at which a power series
is convergent will be termed the region of convergence of the power series.

The series of bicomplex numbers $\dsum\limits_{n=0}^{\infty }c_{n}$
converges absolutely if $\dsum\limits_{n=0}^{\infty }\left\Vert
c_{n}\right\Vert $ converges \ One can verify the necessary and sufficient
condition for the absolute convergence of the series $\dsum\limits_{n=0}^{%
\infty }c_{n}$ is the absolue convergence of the component series $%
\dsum\limits_{n=0}^{\infty }c_{n}^{\prime }$ and $\dsum\limits_{n=0}^{\infty
}c_{n}^{\prime \prime }.$

\section{\textbf{Infinite product of bicomplex numbers}}

\begin{definition}[Infinite product]
If $\left\{ w_{n}\right\} $ is a sequence of bicomplex numbers and if $%
w=\lim\limits_{n\rightarrow \infty }\dprod\limits_{k=1}^{n}w_{k}$ exists,
then $w$ is the infinite product of the numbers $w_{n}$ and it is denoted by%
\begin{equation*}
w=\dprod\limits_{n=1}^{\infty }w_{n}.
\end{equation*}
\end{definition}

If no one of the numbers $w_{n}\in \mathcal{NC}$ and $w=\dprod%
\limits_{n=1}^{\infty }w_{n}$ exists then $w\notin \mathcal{NC}.$\ Let $%
p_{n}=\dprod\limits_{k=1}^{n}w_{k}$ for $n\geq 1;$ then no $p_{n}\in 
\mathcal{NC}$ and $\frac{p_{n}}{p_{n-1}}=w_{n}.$ Since $w\notin \mathcal{NC}$
and $p_{n}\rightarrow w$ we have that $\lim\limits_{n\rightarrow \infty
}w_{n}=1.$ So the necessary condition for the infinite product $%
\dprod\limits_{n=1}^{\infty }w_{n}$\ to be convergent is that $%
\lim\limits_{n\rightarrow \infty }w_{n}=1$ if no one of the numbers $%
w_{n}\in \mathcal{NC}.$ Observe that for $w_{n}=a\neq 0$ for all $n$ and $%
\left\Vert a\right\Vert <1,$ then $\dprod\limits_{n=1}^{\infty }w_{n}=0$
although $\lim\limits_{n\rightarrow \infty }w_{n}=a\neq 0.$

Henceforth we will discuss the the infinite products $\dprod\limits_{n=1}^{%
\infty }w_{n}$ with the supposition that no one of the numbers $w_{n}\in 
\mathcal{NC},$\ unless mentioned otherwise.

Since $e^{w_{1}+w_{2}}=e^{w_{1}}e^{w_{2}}$ for $w_{1},w_{2}\in \mathbb{T},$
let us discuss the convergence of an infinite product by considering the
series $\dsum\limits_{n=1}^{\infty }\log w_{n}$ where $\log $ is the
principal branch of the bicomplex logarithm. The series $\dsum%
\limits_{n=1}^{\infty }\log w_{n}$\ is meaningful if $\log w_{n}$\ is
meaningful for all $n$. As no one of the numbers $w_{n}\in \mathcal{NC},$ $%
\log w_{n}$ is defined and consequently $\dsum\limits_{n=1}^{\infty }\log
w_{n}$\ is meaningful. Now suppose that the series $\dsum\limits_{n=1}^{%
\infty }\log w_{n}$ converges. If $s_{n}=\dsum\limits_{k=1}^{n}\log w_{k}$
and $s_{n}\rightarrow s$ then $e^{s_{n}}\rightarrow e^{s}.$ But $%
e^{s_{n}}=\dprod\limits_{k=1}^{n}w_{k}$ so that $\dprod\limits_{n=1}^{\infty
}w_{n}$ is convergent to $w=e^{s}\notin \mathcal{NC}.$

\begin{theorem}
\label{t1}Let $w_{n}\notin \mathcal{NC}$ for all $n\geq 1.$ Then $%
\dprod\limits_{n=1}^{\infty }w_{n}$\ converges to a nonsingular number iff
the series $\dsum\limits_{n=1}^{\infty }\log w_{n}$\ converges.
\end{theorem}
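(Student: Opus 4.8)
The plan is to prove this biconditional by reducing everything to the idempotent components, where the bicomplex logarithm and exponential split cleanly via the Ringleb decomposition. The key structural fact already established in the excerpt is that if $w_n = w_n' \mathbf{e}_1 + w_n'' \mathbf{e}_2$, then $\log w_n = (\log w_n')\mathbf{e}_1 + (\log w_n'')\mathbf{e}_2$, and the partial products split as $\prod_{k=1}^n w_k = (\prod_{k=1}^n w_k')\mathbf{e}_1 + (\prod_{k=1}^n w_k'')\mathbf{e}_2$. Since $w_n \notin \mathcal{NC}$ exactly when both $w_n' \neq 0$ and $w_n'' \neq 0$, each complex factor is a legitimate nonzero complex number and each complex logarithm $\log w_n'$, $\log w_n''$ is defined.

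Let me check the structure. The forward direction (convergence of the product implies convergence of the series) is actually the harder and more delicate one. The reverse direction (series converges $\Rightarrow$ product converges to a nonsingular number) is essentially already sketched in the paragraph preceding the theorem: if $s_n = \sum_{k=1}^n \log w_k \to s$, then by continuity of the bicomplex exponential $e^{s_n} \to e^s$, and since $e^{s_n} = \prod_{k=1}^n w_k$ and $e^s \notin \mathcal{NC}$ for any bicomplex $s$, the product converges to the nonsingular number $e^s$.

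For the forward direction I would argue componentwise. Suppose $\prod_{n=1}^\infty w_n$ converges to some nonsingular $w = w'\mathbf{e}_1 + w''\mathbf{e}_2$. Convergence in the $\mathbb{R}^4$-norm together with the splitting of partial products forces the complex partial products $\prod_{k=1}^n w_k' \to w'$ and $\prod_{k=1}^n w_k'' \to w''$, with $w', w'' \neq 0$ since $w \notin \mathcal{NC}$. Thus each of the two complex infinite products converges to a nonzero limit. Now I invoke the classical one-variable complex result: a complex infinite product $\prod w_n'$ converging to a nonzero value is equivalent to the convergence of the complex series $\sum \log w_n'$ (principal branch), and likewise for the double-primed component. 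This is exactly the standard complex-analysis theorem on infinite products, and I would cite the classical reference \cite{1a}. Having the convergence of both $\sum \log w_n'$ and $\sum \log w_n''$, I combine them back: the bicomplex series $\sum \log w_n = (\sum \log w_n')\mathbf{e}_1 + (\sum \log w_n'')\mathbf{e}_2$ converges by the componentwise convergence criterion for bicomplex series stated earlier in the excerpt.

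The main obstacle is a subtle branch-matching issue lurking in the forward direction: the classical complex theorem guarantees that $\sum \log w_n'$ converges for the \emph{principal} complex logarithm, but one must confirm that $\log w_n = (\log w_n')\mathbf{e}_1 + (\log w_n'')\mathbf{e}_2$ uses precisely these principal-branch complex logarithms. This is secured by the Ringleb decomposition formula quoted in the excerpt, which defines the principal bicomplex logarithm exactly as the pair of principal complex logarithms on the idempotent components, so no inconsistency arises. Once that alignment is verified, the proof is just the assembly of the two complex-analytic facts through the idempotent isomorphism $\mathbb{T} \cong \mathbb{C}(\mathbf{i}_1) \times \mathbb{C}(\mathbf{i}_1)$, and both directions close.
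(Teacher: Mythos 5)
Your proof is correct, and its converse half (continuity of the bicomplex exponential applied to $s_{n}\rightarrow s$, giving $e^{s_{n}}=\prod_{k=1}^{n}w_{k}\rightarrow e^{s}\notin \mathcal{NC}$) is literally the paper's. The forward direction, however, takes a genuinely different route. The paper never passes to idempotent components: it transplants Conway's one-variable argument wholesale into $\mathbb{T}$, using the trigonometric form $w=\left\vert w\right\vert _{\mathbf{i}_{1}}e^{\mathbf{i}_{2}\Theta _{c0}}$. Writing $p_{n}=\prod_{k=1}^{n}w_{k}$, it sets $l(p_{n})=\log \left\vert p_{n}\right\vert _{\mathbf{i}_{1}}+\mathbf{i}_{2}\Theta _{nc}$, where the real part of the complex argument $\Theta _{nc}$ is pinned to the window $(\mathrm{Re}\,\Theta _{c0}-\pi ,\,\mathrm{Re}\,\Theta _{c0}+\pi ]$, so that $s_{n}=l(p_{n})+2\pi \mathbf{i}_{2}k_{n}$ for integers $k_{n}$; from $\log w_{n}=s_{n}-s_{n-1}\rightarrow 0$ and $l(p_{n})-l(p_{n-1})\rightarrow 0$ it deduces $k_{n}-k_{n-1}\rightarrow 0$, hence that the integers $k_{n}$ are eventually equal to some fixed $k$, hence $s_{n}\rightarrow l(w)+2\pi \mathbf{i}_{2}k$. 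That argument is self-contained (no appeal to the classical complex theorem) but must track a branch of the bicomplex argument, which is delicate because $\Theta _{nc}$ is itself a complex number and only its real part is constrained. Your reduction to two copies of the classical theorem via $w_{n}=w_{n}^{\prime }\mathbf{e}_{1}+w_{n}^{\prime \prime }\mathbf{e}_{2}$ is shorter and outsources exactly this bookkeeping to the one-variable result in \cite{1a}; its price is the branch-matching point you correctly flag, namely that the principal bicomplex logarithm must have componentwise principal complex logarithms. The paper's Ringleb formula asserts this, so your argument closes within the paper's framework, though reconciling that formula with the paper's definition of the principal branch through $Arg\left\vert w\right\vert _{\mathbf{i}_{1}}$ and $Arg_{\mathbf{i}_{1}}w$ is itself a nontrivial point, since $\frac{1}{2}\log (w^{\prime }w^{\prime \prime })$ and $\frac{1}{2}\left( \log w^{\prime }+\log w^{\prime \prime }\right) $ can differ by a multiple of $\pi \mathbf{i}_{1}$ for principal branches. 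If you want your proof to be independent of that global identity, note that the hypothesis forces $w_{n}\rightarrow 1$, hence $w_{n}^{\prime }\rightarrow 1$ and $w_{n}^{\prime \prime }\rightarrow 1$, and near $1$ the componentwise principal logarithms do give the principal bicomplex logarithm; so the series you obtain from the classical theorem and the series $\sum_{n\geq 1}\log w_{n}$ in the statement differ in at most finitely many terms, which does not affect convergence.
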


\begin{proof}
Let $\dprod\limits_{n=1}^{\infty }w_{n}$\ converges to $w\notin \mathcal{NC}%
. $

Let $p_{n}=\dprod\limits_{k=1}^{n}w_{k}=w_{1}w_{2}....w_{n},w=\left\vert
w\right\vert _{\mathbf{i}_{1}}e^{\mathbf{i}_{2}\Theta _{c0}}$ where $%
\left\vert w\right\vert _{\mathbf{i}_{1}}$ is the complex modulus and $%
\Theta _{c0}$\ is the principal value of the complex argument of $w.$ Also
let $l\left( p_{n}\right) =\log \left\vert p_{n}\right\vert _{\mathbf{i}%
_{1}}+\mathbf{i}_{2}\Theta _{nc}$ where $\left\vert p_{n}\right\vert _{%
\mathbf{i}_{1}}$ is the complex modulus of $p_{n}$ and $\func{Re}\Theta
_{c0}-\pi <\func{Re}\Theta _{nc}\leq \func{Re}\Theta _{c0}+\pi .$

If $s_{n}=\dsum\limits_{k=1}^{n}\log w_{k}=w_{1}+\log w_{2}+.....+\log w_{n}$
then $e^{s_{n}}=w_{1}w_{2}....w_{n}=p_{n}$ so that $s_{n}=\log \left\vert
p_{n}\right\vert _{\mathbf{i}_{1}}+\mathbf{i}_{2}\Theta _{nc}+2\pi \mathbf{i}%
_{2}k_{n}=l\left( p_{n}\right) +2\pi \mathbf{i}_{2}k_{n}$ for some integer $%
k_{n}.$

Since $p_{n}\rightarrow w\notin \mathcal{NC}$ thus $\lim\limits_{n%
\rightarrow 1}w_{n}=1$ and consequently $\lim\limits_{n\rightarrow \infty
}\log w_{n}=0.$ So $s_{n}-s_{n-1}=\log w_{n}\rightarrow 0;$ also $l\left(
p_{n}\right) -l\left( p_{n-1}\right) \rightarrow 0.$ Hence $\left(
k_{n}-k_{n-1}\right) \rightarrow 0$ as $n\rightarrow \infty .$\ Since each $%
k_{n}$ is an integer this gives that there is an $n_{0}$ and a $k$ such that 
$k_{m}=k_{n}=k$ for $m,n\geq n_{0}.$ So $s_{n}\rightarrow l\left( z\right)
+2\pi \mathbf{i}_{2}k;$ that is, the series $\dsum\limits_{n=1}^{\infty
}\log w_{n}$\ converges.

Conversely, assume that $\dsum\limits_{n=1}^{\infty }\log w_{n}$\ converges.
If $s_{n}=\dsum\limits_{k=1}^{n}\log w_{k}$ and $s_{n}\rightarrow s$ then $%
e^{s_{n}}\rightarrow e^{s}.$ But $e^{s_{n}}=\dprod\limits_{k=1}^{n}w_{k}$ so
that $\dprod\limits_{n=1}^{\infty }w_{n}$ converges to $w=e^{s}\notin 
\mathcal{NC}.$\newline
\end{proof}

\begin{itemize}
\item Let $w=w^{\prime }\mathbf{e}_{1}+w^{\prime \prime }\mathbf{e}_{2}$
then 
\begin{equation*}
1+w=\left( \mathbf{e}_{1}+\mathbf{e}_{2}\right) +\left( w^{\prime }\mathbf{e}%
_{1}+w^{\prime \prime }\mathbf{e}_{2}\right) =\left( 1+w^{\prime }\right) 
\mathbf{e}_{1}+\left( 1+w^{\prime \prime }\right) \mathbf{e}_{2}.
\end{equation*}%
Thus for $\left\Vert w\right\Vert <1$ and by Ringleb Decomposition Theorem,%
\begin{eqnarray*}
\log \left( 1+w\right) &=&\log \left( 1+w^{\prime }\right) \mathbf{e}%
_{1}+\log \left( 1+w^{\prime \prime }\right) \mathbf{e}_{2} \\
&=&\left( \dsum\limits_{n=0}^{\infty }\frac{\left( -1\right) ^{n-1}}{n}%
w^{\prime n}\right) \mathbf{e}_{1}+\left( \dsum\limits_{n=0}^{\infty }\frac{%
\left( -1\right) ^{n-1}}{n}w^{\prime \prime n}\right) \mathbf{e}_{1} \\
&=&\dsum\limits_{n=0}^{\infty }\left( \frac{\left( -1\right) ^{n-1}}{n}%
w^{\prime n}\mathbf{e}_{1}+\frac{\left( -1\right) ^{n-1}}{n}w^{\prime \prime
n}\mathbf{e}_{1}\right) \\
&=&\dsum\limits_{n=0}^{\infty }\frac{\left( -1\right) ^{n-1}}{n}w^{n} \\
&=&w-\frac{w^{2}}{2}+\frac{w^{3}}{3}-......\cdot
\end{eqnarray*}
\end{itemize}

If $\left\Vert z\right\Vert <1$ then%
\begin{eqnarray*}
\left\Vert 1-\frac{\log \left( 1+w\right) }{w}\right\Vert &=&\left\Vert 
\frac{1}{2}w-\frac{1}{3}w^{2}+......\right\Vert \\
&\leq &\frac{1}{2}\left( \left\Vert w\right\Vert +\left\Vert w\right\Vert
^{2}+.....\right) \\
&=&\frac{1}{2}\frac{\left\Vert w\right\Vert }{1-\left\Vert w\right\Vert }.
\end{eqnarray*}%
If we further require $\left\Vert w\right\Vert <\frac{1}{2}$ then%
\begin{equation*}
\left\Vert 1-\frac{\log \left( 1+w\right) }{w}\right\Vert \leq \frac{1}{2}.
\end{equation*}%
This gives that for $\left\Vert w\right\Vert <\frac{1}{2}$ 
\begin{equation}
\frac{1}{2}\left\Vert w\right\Vert \leq \left\Vert \log \left( 1+w\right)
\right\Vert \leq \frac{3}{2}\left\Vert w\right\Vert .  \label{1}
\end{equation}

\begin{theorem}
\label{t2} Let $w_{n}=w_{n}^{\prime }\mathbf{e}_{1}+w_{n}^{\prime \prime }%
\mathbf{e}_{2}$ such that $\func{Re}w_{n}^{\prime }>-1$ and $\func{Re}%
w_{n}^{\prime \prime }>-1;$ then the series $\dsum\limits_{n=0}^{\infty
}\log \left( 1+w_{n}\right) $ converges absolutely if and only if the series 
$\dsum\limits_{n=0}^{\infty }w_{n}$ converges absolutely.
\end{theorem}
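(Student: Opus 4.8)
The plan is to reduce the bicomplex statement to two classical complex-variable facts via the idempotent decomposition, and then glue them back together using the norm equivalence already established in $\eqref{1}$. First I would record the absolute-convergence criterion inherited from the Preliminaries: a series $\dsum c_n$ of bicomplex numbers converges absolutely iff both component series $\dsum c_n'$ and $\dsum c_n''$ converge absolutely (this is stated just before Section 3). Applying this to $\dsum w_n$ tells us that $\dsum\left\Vert w_n\right\Vert$ converges iff $\dsum\left\vert w_n'\right\vert$ and $\dsum\left\vert w_n''\right\vert$ both converge; and since by the Ringleb decomposition $\log(1+w_n)=\log(1+w_n')\,\mathbf{e}_1+\log(1+w_n'')\,\mathbf{e}_2$, the same criterion says $\dsum\log(1+w_n)$ converges absolutely iff $\dsum\left\vert\log(1+w_n')\right\vert$ and $\dsum\left\vert\log(1+w_n'')\right\vert$ both converge. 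So the whole theorem splits into the two scalar assertions: for complex $u_n$ with $\func{Re}u_n>-1$, the series $\dsum\log(1+u_n)$ converges absolutely iff $\dsum u_n$ does.

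For each scalar assertion I would argue as follows. In either direction, absolute convergence of the relevant series forces its general term to tend to $0$; hence (using $\func{Re}u_n>-1$, which keeps $1+u_n$ away from the branch cut) both $u_n\to 0$ and $\log(1+u_n)\to 0$. Thus for all sufficiently large $n$ we have $\left\vert u_n\right\vert<\tfrac12$, and the estimate in $\eqref{1}$—applied in the single idempotent component, i.e. to the ordinary complex logarithm—gives
\begin{equation*}
\tfrac12\left\vert u_n\right\vert\leq\left\vert\log(1+u_n)\right\vert\leq\tfrac32\left\vert u_n\right\vert.
\end{equation*}
The two-sided bound shows the tails $\dsum\left\vert\log(1+u_n)\right\vert$ and $\dsum\left\vert u_n\right\vert$ converge or diverge together by direct comparison, which establishes both directions of the scalar claim simultaneously. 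I would apply this once with $u_n=w_n'$ and once with $u_n=w_n''$.

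Finally I would assemble the pieces: by the component criterion, $\dsum w_n$ converges absolutely $\iff$ $\dsum w_n'$ and $\dsum w_n''$ converge absolutely $\iff$ (by the scalar result applied twice) $\dsum\log(1+w_n')$ and $\dsum\log(1+w_n'')$ converge absolutely $\iff$ $\dsum\log(1+w_n)$ converges absolutely. The main obstacle—and the point to state carefully—is the justification that $\eqref{1}$ may be used componentwise on the complex logarithm rather than on the full bicomplex $\log$: the inequality $\eqref{1}$ was derived for the bicomplex $\log(1+w)$ under $\left\Vert w\right\Vert<\tfrac12$, but its derivation rests on the termwise Ringleb expansion, so the identical Maclaurin estimate holds for the scalar $\log(1+u_n)$ with $\left\vert u_n\right\vert<\tfrac12$; I would note this explicitly. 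A secondary technical point is that the hypothesis $\func{Re}w_n^{\prime}>-1,\ \func{Re}w_n^{\prime\prime}>-1$ guarantees $\log(1+w_n)$ is well defined on the principal branch for every $n$, so the series $\dsum\log(1+w_n)$ is meaningful from the outset.
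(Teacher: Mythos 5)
Your proposal is correct, and it proves the theorem by a route that is recognizably close to, but not identical with, the paper's. The paper stays at the bicomplex level throughout: it uses the component series $\sum\left\vert w_{n}^{\prime }\right\vert$ and $\sum\left\vert w_{n}^{\prime \prime }\right\vert$ only to conclude that eventually $\left\Vert w_{n}\right\Vert <\frac{1}{2}$, and then applies the two-sided bound $\left( \ref{1}\right)$ directly to the bicomplex terms, comparing $\sum\left\Vert \log \left( 1+w_{n}\right) \right\Vert$ with $\sum\left\Vert w_{n}\right\Vert$ in both directions. You instead decompose the statement into its two idempotent components from the start, reduce to the classical one-variable fact (for complex $u_{n}$ with real part $>-1$, the series $\sum\left\vert \log \left( 1+u_{n}\right) \right\vert$ and $\sum\left\vert u_{n}\right\vert$ converge together), prove that fact with the scalar avatar of $\left( \ref{1}\right)$, and glue the pieces back together with the componentwise absolute-convergence criterion from the Preliminaries. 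Since both arguments hinge on the same Maclaurin estimate, the difference is largely one of packaging; still, your version buys two things the paper's does not make explicit: first, in the converse direction the paper merely asserts that convergence of $\sum\left\Vert \log \left( 1+w_{n}\right) \right\Vert$ forces $\left\Vert w_{n}\right\Vert <\frac{1}{2}$ for large $n$, whereas you justify the analogous step by exponentiating, $1+u_{n}=e^{\log \left( 1+u_{n}\right) }\rightarrow 1$; second, your argument shows where the hypothesis on the real parts of $w_{n}^{\prime }$ and $w_{n}^{\prime \prime }$ actually enters (well-definedness of the principal branch and validity of that exponentiation step), a hypothesis the paper's proof never visibly invokes. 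What the paper's version buys is brevity: a single application of $\left( \ref{1}\right)$ and no need to restate its scalar form. One simplification available to you: the scalar estimate need not be rederived from the termwise expansion, since it is the special case of $\left( \ref{1}\right)$ for $w\in \mathbb{C}(\mathbf{i}_{1})$, where $w^{\prime }=w^{\prime \prime }=w$ and $\left\Vert w\right\Vert =\left\vert w\right\vert$, so $\left( \ref{1}\right)$ reduces verbatim to the complex inequality you need.
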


\begin{proof}
If $\dsum\limits_{n=0}^{\infty }\left\Vert w_{n}\right\Vert $ converges then 
$\dsum\limits_{n=0}^{\infty }\left\vert w_{n}^{\prime }\right\vert $ and $%
\dsum\limits_{n=0}^{\infty }\left\vert w_{n}^{\prime \prime }\right\vert $
both converge. Thus $w_{n}^{\prime }\rightarrow 0$ and $w_{n}^{\prime \prime
}\rightarrow 0.$ So eventually $\left\vert w_{n}^{\prime }\right\vert <\frac{%
1}{2}$ and $\left\vert w_{n}^{\prime \prime }\right\vert <\frac{1}{2}$ and
consequently $\left\Vert w_{n}\right\Vert <\frac{1}{2}.$ By $\left( \ref{1}%
\right) $ $\dsum\limits_{n=0}^{\infty }\left\Vert \log \left( 1+w_{n}\right)
\right\Vert $ is dominated by a convergent series, and it must converge also.

Conversely if $\dsum\limits_{n=0}^{\infty }\left\Vert \log \left(
1+w_{n}\right) \right\Vert $ converges, then it follows that $\left\Vert
w_{n}\right\Vert <\frac{1}{2}$ for sufficiently large $n.$ Again $\left( \ref%
{1}\right) $ allows us to conclude that $\dsum\limits_{n=0}^{\infty
}\left\Vert w_{n}\right\Vert $ converges.\newline
\end{proof}

We wish to define the absolute convergence of the infinite product. Unlike
the absolute convergence of infinite series the convergence of $%
\dprod\limits_{n=1}^{\infty }\left\Vert w_{n}\right\Vert $ does aloow the
convergence of $\dprod\limits_{n=1}^{\infty }w_{n}.$ In fact if $w_{n}=-1$
for all $n;$ then $\left\Vert w_{n}\right\Vert =1$ for all $n$ so that $%
\dprod\limits_{n=1}^{\infty }\left\Vert w_{n}\right\Vert $ converges to $1.$
However $\dprod\limits_{n=1}^{\infty }w_{n}=\pm 1$ depending on whether $n$
is even or odd, so that $\dprod\limits_{n=1}^{\infty }w_{n}$ does not
converge.

\begin{definition}
\label{d1}If $w_{n}\notin \mathcal{NC}$ for all $n$ then the infinite
product $\dprod\limits_{n=1}^{\infty }w_{n}$ is said to converge absolutely
if the series $\dsum\limits_{n=1}^{\infty }\log w_{n}$ converges absolutely.
\end{definition}

According to Theorem $\ref{t1}$ and the fact that the absolute convergence
of a series imples the convergence, we have that absolute convergence of a
product implies the convergence of the product. Also any rearrangement of
the terms of an absolutely convergent product follows the absolutely
convergence. Combining Theorem $\ref{t1}$ and Theorem $\ref{t2}$ with the
Definition $\ref{d1}$ we get the following fundamental criterion for the
convergence of an infinite product.

\begin{corollary}
If $w_{n}\notin \mathcal{NC}$ for all $n$ then the infinite product $%
\dprod\limits_{n=1}^{\infty }w_{n}$ converges absolutely if and only if the
series $\dsum\limits_{n=0}^{\infty }\left( w_{n}-1\right) $ converges
absolutely.
\end{corollary}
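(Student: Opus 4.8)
The plan is to obtain the corollary almost immediately from Definition \ref{d1} together with Theorem \ref{t2}, once a single hypothesis is arranged. First I would use Definition \ref{d1} to restate the assertion: for $w_n\notin\mathcal{NC}$, the product $\dprod\limits_{n=1}^{\infty}w_{n}$ converges absolutely precisely when the series $\dsum\limits_{n=1}^{\infty}\log w_{n}$ converges absolutely. Since no $w_n\in\mathcal{NC}$, each principal-branch $\log w_n$ is defined, so the series is meaningful. I would then write $w_{n}=1+(w_{n}-1)$ and set $v_{n}:=w_{n}-1$, so that $\log w_{n}=\log(1+v_{n})$; this is exactly the form handled by Theorem \ref{t2}, with $v_n$ playing the role of its ``$w_n$''.

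The key point, and the one genuine obstacle, is that Theorem \ref{t2} carries the standing hypothesis $\func{Re}v_{n}^{\prime}>-1$ and $\func{Re}v_{n}^{\prime\prime}>-1$, which is not part of the corollary's assumptions; I would secure it by observing that either side's absolute convergence already forces $w_{n}\to 1$. Indeed, if $\dsum\left\Vert\log w_{n}\right\Vert$ converges then $\log w_{n}\to 0$, so in idempotent components $\log w_{n}^{\prime}\to 0$ and $\log w_{n}^{\prime\prime}\to 0$, whence $w_{n}^{\prime}\to 1$ and $w_{n}^{\prime\prime}\to 1$ and thus $w_{n}\to 1$; symmetrically, if $\dsum\left\Vert w_{n}-1\right\Vert$ converges then $v_{n}\to 0$ and again $w_{n}\to 1$. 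In either case $v_{n}^{\prime}\to 0$ and $v_{n}^{\prime\prime}\to 0$, so there is an index $N$ beyond which $\func{Re}v_{n}^{\prime}>-1$ and $\func{Re}v_{n}^{\prime\prime}>-1$. Hence the tail $\{v_{n}\}_{n\geq N}$ satisfies the hypothesis of Theorem \ref{t2}.

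Finally I would apply Theorem \ref{t2} to this tail: it gives that $\dsum\limits_{n\geq N}\log(1+v_{n})=\dsum\limits_{n\geq N}\log w_{n}$ converges absolutely if and only if $\dsum\limits_{n\geq N}v_{n}=\dsum\limits_{n\geq N}(w_{n}-1)$ converges absolutely. Because absolute convergence is unaffected by the finitely many terms with $n<N$, this is equivalent to the statement that $\dsum\limits_{n=1}^{\infty}\log w_{n}$ converges absolutely if and only if $\dsum\limits_{n=1}^{\infty}(w_{n}-1)$ converges absolutely. Combining this with the Definition \ref{d1} restatement from the first step yields the corollary. No estimate beyond the inequality \eqref{1} already used inside Theorem \ref{t2} is needed; the only care required is the tail bookkeeping that installs Theorem \ref{t2}'s real-part hypothesis, handled exactly as above.
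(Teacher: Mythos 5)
Your proposal is correct and takes essentially the same route as the paper: the paper's entire proof is the one-line remark that the corollary follows by combining Theorem \ref{t1}, Theorem \ref{t2} and Definition \ref{d1}, and your argument is exactly that combination, via the substitution $v_{n}=w_{n}-1$. You are in fact more careful than the paper itself, which never explains how the real-part hypothesis of Theorem \ref{t2} is to be met; your tail argument (either side's absolute convergence forces $w_{n}\rightarrow 1$, so the hypothesis holds for all sufficiently large $n$, and finitely many terms do not affect absolute convergence) supplies precisely that missing bookkeeping.
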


\end{document}